\newtheorem{theorem}{Theorem}[section]
\newtheorem{conjecture}[theorem]{Conjecture}
\newtheorem{corollary}[theorem]{Corollary}
\newtheorem{observation}[theorem]{Observation}
\newtheorem{lemma}[theorem]{Lemma}
\newcommand{\heading}[1]{\vspace{1ex}\par\noindent{\bf #1}}
\newcommand\eps{\varepsilon}
\newcommand{\ProofEndBox}{{\ifhmode\unskip\nobreak\hfil\penalty50 \else
          \leavevmode\fi\quad\vadjust{}\nobreak\hfill$\Box$
            \finalhyphendemerits=0 \par}}%
\newcommand{\proofend}{\ProofEndBox\smallskip}
\newenvironment{proof}{\par\medskip\noindent{\bf Proof: }}
{\hskip 2cm\unskip\hbox{}\hfill$\Box$\par\bigskip}
\def\proofof #1{\noindent{\bf Proof of #1:\hskip 0.5em}}
\def\ceil#1{\lceil #1 \rceil}
\newcommand\sep{:\,}
\def\:{\colon}
\title{Induced trees in triangle-free graphs}
\author{
{\sc Ji\v{r}\'{\i} Matou\v{s}ek} \ \ \ \ {\sc Robert \v{S}\'amal%
\thanks{Currently on leave from Institute for Theoretical
     Computer Science (ITI). The paper was finished while the second author was
     a PIMS postdoctoral fellow at Department of Mathematics, Simon Fraser
     University, Burnaby, B.C. V5A 1S6, Canada.}
   }\\
   {\footnotesize Department of Applied Mathematics and}\\[-1.5mm]
   {\footnotesize Institute of Theoretical Computer Science (ITI)}\\[-1.5mm]
   {\footnotesize  Charles University}\\[-1.5mm]
   {\footnotesize  Malostransk\'{e} n\'{a}m. 25, 118~00~~Praha~1}\\[-1.5mm]
   {\footnotesize Czech Republic}
}
\date{}
\begin{document}

\maketitle

\begin{abstract}
We prove that every connected triangle-free graph on $n$ vertices contains
an induced tree on $\exp(c\sqrt{\log n}\,)$ vertices, where $c$ is  
a positive constant. The best known upper bound is $(2+o(1))\sqrt n$. 
This partially answers questions of Erd\H{o}s, Saks, and S\'os and of Pultr.
\end{abstract}

\leftline{{\bfseries Keywords:\enspace}
  induced subgraphs, trees, Ramsey-type theorems
}

\leftline{{\bfseries MSC:\enspace}
05C55, 05C05 
}

\section{Introduction}\label{s:uvod}

For a graph $G$, let $t(G)$ denote the maximum number of vertices
of an induced subgraph of $G$ that is a tree (i.e., connected and
acyclic). 
There are arbitrary large graphs $G$ with $t(G) \le 2$, namely 
graphs in which every connected component is a clique.
To rule out these trivial examples, we need to put some restrictions
on~$G$. 

Motivated by study of forbidden configurations in Priestley 
spaces~\cite{BP-Priestley},
Pultr (private communication, 2002) asked how big $t(G)$ can be
if $G$ is connected and bipartite. Formally, he was interested
about asymptotic properties of the function
$$
  f_B(n)=\min\{t(G)\sep |V(G)|=n,\,\mbox{$G$ connected and bipartite}\}.
$$
Pultr's question was the starting point of our work.
However, the function $t(G)$ was studied earlier and in a more
general context by Erd\H{o}s, Saks, and S\'os \cite{ESS-itrees}. They
describe the influence of the number of edges of~$G$ on~$t(G)$ and, 
more to our point, they study how small $t(G)$ can be if $\omega(G)$ is given.
They observe that $t(G) \le 2 \alpha(G)$, and this
allows them to use estimates for Ramsey numbers. This way, they show
that for any fixed $k>3$ there are constants $c_1$, $c_2$ such that
$$
   c_1 \frac{\log n}{\log \log n} \le 
    \min \{ t(G) \sep |V(G)|=n, \, G \not\supseteq K_k \} \le c_2 \log n \,.
$$
For $k=3$ the lower bound still applies, but the upper bound obtained
by using Ramsey numbers was only $O(\sqrt n \log n)$ (nowadays this
approach yields $O(\sqrt{n\log n})$, due to the improved lower bound
on $R(k,3)$, see \cite{Kim}). 
We concentrate on this case $k=3$, that is we put
$$
f_T(n)=\min \{t(G)\sep |V(G)|=n,\,\mbox{$G$ connected and triangle-free}\}.
$$
Instead of applying Ramsey theory, we approach the problem directly.

It is easy to show that $f_T(n)\leq f_B(n)=O(\sqrt n\,)$. The best
construction we are aware of yields $f_B(n)\leq (2+o(1))\sqrt n$;
see~Section~\ref{s:init}. 
A simple ``blow-up'' construction, also presented in Section~\ref{s:init},
shows that if $f_T(n_0)<\sqrt {n_0}$ for some $n_0$, then
$f_T(n)=O(n^{1/2-\eps})$ for a positive constant $\eps>0$, and similarly
for $f_B$.
Hence, $f_T(n)$ either is of order exactly $\sqrt n$, or it is bounded above by
some power strictly smaller than $1/2$. We conjecture that the
second possibility holds, and that another power of~$n$ is a 
lower bound. 

\begin{conjecture}\label{c:b}
There are constants $0 < \alpha < \beta < 1/2$, and $c_1$, $c_2$ 
such that for all $n$
$$
      c_1 n^\alpha \leq  f_T(n) \leq f_B(n) \leq c_2 n^\beta \,.
$$
\end{conjecture}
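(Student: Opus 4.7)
The plan is to handle the upper and lower bounds separately, as they call for different techniques.

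For the upper bound $f_B(n) \le c_2 n^\beta$, by the blow-up construction mentioned in the paper it suffices to exhibit a single bipartite graph $G_0$ on $n_0$ vertices with $t(G_0) < \sqrt{n_0}$; then iteration produces an exponent $\beta < 1/2$. I would search among highly structured bipartite graphs: incidence graphs of finite generalized polygons (bipartite, high girth, large spectral gap), bipartite Kneser-type graphs, or a random $d$-regular bipartite graph of well-chosen degree. For the random construction, one estimates the expected number of induced trees on $k$ vertices by summing over labeled trees and requiring no \emph{extra} edges among their vertex sets; the hope is that choosing $k$ slightly below $\sqrt{n_0}$ with $d$ in the ``critical'' range drives the first moment to $0$. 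The delicate point is balancing the entropy of the tree against the probability of having no non-tree edges among $k$ vertices.

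For the lower bound $f_T(n) \ge c_1 n^\alpha$, the natural approach is a dichotomy on maximum degree combined with BFS. If some vertex has degree $\Delta \ge n^\alpha$, then its neighborhood is an independent set of size $\Delta$ (by triangle-freeness), giving an induced star on $\Delta+1$ vertices. Otherwise, run BFS from any vertex: the levels $V_0, V_1, \ldots$ partition $V(G)$, and for each $u \in V_i$ the neighbors of $u$ in $V_{i+1}$ form an independent set. To extract a large induced subtree from the BFS tree, one would iteratively select a subset of BFS edges so that no two chosen vertices in $V_{i+1}$ share another neighbor in $V_i$, via random or greedy pruning per level, aiming to preserve a polynomial fraction of vertices.

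The main obstacle is the intermediate-degree regime. The naive BFS argument only delivers a path of length $\log n / \log \Delta$, which collapses to a constant when $\Delta = n^{1/2-\eps}$, and the pruning step above typically loses a factor of $\Delta$ per level. Overcoming this seems to require a genuinely self-similar or recursive step: arguing that either the BFS expansion is substantially slower than $\Delta$ (buying more depth) or that one can apply induction to a large induced subgraph with fewer vertices and smaller maximum degree. The paper's $\exp(c\sqrt{\log n})$ bound is essentially what such a recursion yields before losing control, and pushing the exponent to a positive constant would need a new structural insight into triangle-free graphs of moderate degree — plausibly through spectral methods, pseudorandomness, or regularity-type arguments.
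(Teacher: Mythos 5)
This statement is a \emph{conjecture}, not a theorem, and the paper does not prove it. The authors prove only the much weaker, subpolynomial lower bound $f_T(n)\geq \exp(c\sqrt{\log n}\,)$ (Theorem~\ref{t:t}), and the best upper bound they establish is $f_B(n)\leq (2+o(1))\sqrt n$ (Observation~\ref{l:path}), which corresponds to exponent exactly $1/2$, not some $\beta<1/2$. Both halves of the conjecture remain open, so there is no ``paper's own proof'' to compare against.

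Your proposal is, accordingly, a research sketch rather than a proof, and you are candid about this (``the hope is\ldots'', ``the delicate point is\ldots'', ``would need a new structural insight''). On the upper bound, you correctly identify that Corollary~\ref{sqrt} reduces the problem to exhibiting a single bipartite $G_0$ with $t(G_0)<\sqrt{|V(G_0)|}$; that is exactly the reduction the authors have in mind, and they report that a computer search over small $n$ (via \cite{KO-itrees}) failed to find such a graph. Your candidate families (generalized polygons, random $d$-regular bipartite graphs) are plausible to try, but the first-moment calculation you gesture at is the whole difficulty: you need to control both the number of induced trees and the number of extra edges, and it is not clear that any parameter regime drives the expectation to zero below $\sqrt{n_0}$ vertices. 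On the lower bound, your degree dichotomy plus BFS-level argument is essentially the skeleton of the paper's proof of Theorem~\ref{t:t}, and you correctly diagnose why it stalls at $\exp(c\sqrt{\log n})$: each branching step costs a polynomial-in-$t$ factor, so one can afford only $O(\sqrt{\log n})$ branchings before the level sets are exhausted. The concrete gap is that no version of the BFS/pruning argument in your sketch, nor the one in Section~\ref{s:tf}, produces a polynomial lower bound; closing the conjecture genuinely requires a new idea beyond what either you or the paper supplies.
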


The following lower bound is the main result of this paper.

\begin{theorem}\label{t:t}
There is a constant $c>0$ such that for all $n$
$$
f_T(n)\geq e^{c\,\sqrt{\log n}} \,.
$$
\end{theorem}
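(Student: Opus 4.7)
My plan is to combine a maximum-degree dichotomy with an iterative growth of an induced subtree along BFS layers from a fixed vertex. Write $k := \lceil e^{c\sqrt{\log n}}\rceil$ for a sufficiently small constant $c>0$; the goal is to exhibit an induced tree on at least $k$ vertices.

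\textbf{Degree dichotomy.} If some vertex $v$ has $\deg(v)\geq k$, then since $G$ is triangle-free, $N(v)$ is independent, so $\{v\}\cup N(v)$ induces a star $K_{1,\deg(v)}$, an induced tree on $\geq k+1$ vertices. Hence I may assume $\Delta(G)<k$. Running BFS from any $v_0$, let $L_i$ denote the $i$-th layer; since $|L_i|\leq\Delta^i<k^i$ and $G$ is connected on $n$ vertices, some vertex lies at distance $R\geq \log n/\log\Delta\geq \sqrt{\log n}/c$ from $v_0$, and a shortest path to such a vertex is already an induced path of length $R$. This length is far below $k$, so the path alone does not suffice.

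\textbf{Thickening the path.} The plan is to pick, layer by layer, sets $S_0=\{v_0\},\,S_1\subseteq L_1,\ldots,S_R\subseteq L_R$ satisfying (i) each $S_i$ is independent in $G$, (ii) every vertex of $S_{i+1}$ has exactly one neighbor in $S_i$, and (iii) $|S_{i+1}|\geq\gamma|S_i|$ for a fixed $\gamma>1$. Combined with the BFS property (no edges between non-consecutive layers), (i) and (ii) force $\bigcup_i S_i$ to induce a tree (one edge per non-root vertex in a connected graph). Property (iii) then yields $|T|\geq\gamma^R\geq e^{c\sqrt{\log n}}$ as soon as $\log\gamma\geq c^2$, which is achievable with a constant $\gamma$ close to $1$ when $c$ is small. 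For the expansion step I inspect the ``clean'' candidates in $L_{i+1}$ (those with exactly one neighbor in $S_i$): clean children of a single parent $u\in S_i$ lie in $N(u)$ and are pairwise non-adjacent by triangle-freeness, so the only obstruction to independence comes from cross-parent edges, from which a large independent subset can be extracted greedily or via the Ajtai--Koml\'os--Szemer\'edi bound on independence numbers of triangle-free graphs.

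\textbf{Main obstacle.} The delicate part is sustaining (iii) through all $R$ layers: if $|L_{i+1}|$ is much smaller than $|L_i|$, or if most candidate children have several parents in $S_i$, the pool of clean candidates is too thin for the desired constant-factor expansion. Plausible remedies are (a) an amortized analysis that skips some narrow layers and compensates at later wide ones, (b) choosing $v_0$ carefully (e.g., via repeated BFS to maximize eccentricity) so that the layer widths behave more monotonically, or (c) replacing ``clean children'' by an explicit matching / branching argument that assigns each selected descendant to a unique ancestor. Balancing the constants $c$ and $\gamma$ against the Ajtai--Koml\'os--Szemer\'edi constant while handling these irregularities is the technical heart of the argument.
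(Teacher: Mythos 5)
There is a genuine gap, and it lies at the center of your plan: the requirement of constant per-layer expansion $|S_{i+1}|\ge\gamma|S_i|$ with a fixed $\gamma>1$, growing outward from the root. This invariant cannot be maintained in general. If a single BFS layer $L_{i+1}$ is narrow (even $|L_{i+1}|=1$, as happens near cut vertices, or along any path-like stretch of the graph), then $|S_{i+1}|\le|L_{i+1}|$ forces a collapse of the frontier, and the accumulated vertices at earlier levels become dead weight for all further growth. None of your three suggested remedies repairs this. Amortization (a) cannot skip BFS layers, since non-consecutive layers are non-adjacent; choosing the root (b) cannot prevent narrow layers in all triangle-free graphs; and (c) is not developed enough to assess. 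You correctly identify the obstacle, but you do not overcome it, and the remaining ``technical heart'' is in fact where the real argument must live.

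The paper's proof resolves exactly this difficulty by reversing direction and by replacing per-layer expansion with a step-counting argument. Since there are fewer than $t$ levels (else an induced path of length $t$ is done), some level $L_k$ has $|L_k|\ge n/t$. Starting from a large independent $M_k\subseteq L_k$, the paper builds a \emph{shrinking} chain $M_k,M_{k-1},\ldots,M_\ell$ toward the root until $|M_\ell|=1$. The engine is Lemma~\ref{l:slabe}: between $M_i$ and $L_{i-1}$ one finds either a near-perfect matching (a ``matching step'', losing only a $(1-1/t)$ factor) or a $2$-branching up-forest of size $\ge|M_i|/\mathrm{poly}(t)$ (a ``branching step''). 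Going from $\ge n/t^2$ down to $1$, with matching steps losing only a constant in total, forces $b=\Omega(\log n/\log t)=\Omega(\sqrt{\log n})$ branching steps. The tree rooted at the lone vertex of $M_\ell$ then has $\ge 2^b$ vertices, because it branches (degree $\ge 2$) at every branching level. This inward direction is essential: narrow levels never hurt, because $L_{i-1}$ only needs to accommodate the already-shrunk $M_{i-1}$, not to exceed it.

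Two further ingredients in the paper are absent from your sketch. First, the up-forest dichotomy Lemma~\ref{l:slabe} is the precise quantitative tool that makes the matching-vs-branching split work; your ``clean children'' idea is in the right spirit but is not formulated or proved. Second, and specific to triangle-free (non-bipartite) graphs, the paper must handle edges \emph{within} a level. After a matching step, $M'_{i-1}$ may contain edges; Lemma~\ref{l:turlike} then yields either a large independent subset (proceed normally) or a large induced matching, in which case the paper performs a ``double-step'': it contracts the matching edges, descends two levels at once to build $M_{i-2}$, and verifies that each vertex of $M_{i-2}$ still acquires at least two successors by the time one reaches $M_i$. Your proposal mentions only cross-parent edges among children of $S_i$ and the Ajtai--Koml\'os--Szemer\'edi bound, and does not address this structural complication at all.
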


We finish the introduction by mentioning further
results concerning~$t(G)$. It is interesting to consider the problem 
of finding induced trees in (sparse) random graphs.
Vega~\cite{Vega-itree} shows that $t(G_{n,c/n}) = \Omega(n)$ a.s.;
Palka and Ruci{\'n}ski~\cite{PR-itree} prove
that $t(G_{n,c\log n/n}) = \Theta(n \log \log n/\log n)$ a.s.

Krishnan and Ochem~\cite{KO-itrees} search for values of $f_T(n)$
(for small~$n$) using a computer; they succeed to find $f_T(n)$ for $n \le 15$.
They also extend results of~\cite{ESS-itrees} about the 
decision problem: ``given a connected graph~$G$ and an integer~$t$,
does~$G$ have an induced tree with~$t$ vertices?''. Not only this is
NP-complete for general graphs (which is proved in~\cite{ESS-itrees}), 
but it remains NP-complete even if we restrict to bipartite graphs, 
or to triangle-free graphs of maximum degree~4.

\section{Initial observations}\label{s:init}

\begin {observation}   \label{l:path}
$f_B(n) \le (2+o(1))\sqrt n$.
\end {observation}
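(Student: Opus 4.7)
The plan is to exhibit an explicit family of connected bipartite graphs $G_q$ on $n = n(q) \sim q^2$ vertices with $t(G_q) \le (2+o(1))q$; this directly gives $f_B(n) \le (2+o(1))\sqrt n$ for the values $n = n(q)$, and to pass to arbitrary $n$ one can take the $G_q$ with the largest $n(q) \le n$ and attach $o(\sqrt n)$ pendant vertices, which increases $t$ by at most the same order.

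The construction should be chosen so that $V(G_q)$ decomposes into a ``backbone'' part of size $\sim q$ plus roughly $q^2$ auxiliary vertices, arranged so that any large induced acyclic subgraph must either exhaust the backbone linearly (giving the $2q$ main term) or create a short cycle among the auxiliary vertices. To bound $t(G_q)$, I would take an induced tree $T\subseteq G_q$, partition $V(T)$ by structural type (backbone vertices vs.\ various kinds of auxiliary vertices, including those attached to two backbone vertices and those attached as pendants), and then combine two ingredients: acyclicity, which forbids certain local configurations (for instance, two backbone vertices cannot share two chosen auxiliary neighbors), and the tree identity $|E(T)|=|V(T)|-1$, which rigidly couples the different type counts. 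The resulting optimization over admissible type counts should yield $|V(T)| \le 2q + o(q)$.

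The main obstacle is to achieve the sharp constant~$2$. Many natural candidates only give $c\sqrt n$ with $c \ge 3$ or even $t(G) = \Theta(n)$: subdivisions of $K_m$, grid-like incidence graphs, and incidence graphs of projective planes all admit induced trees of the form ``one or two central vertices plus very many pendant paths'', which inflate $t$ far beyond $\sqrt n$. A successful construction must therefore be designed so that backbone vertices and auxiliary/pendant vertices compete for the same structural budget rather than adding independently; this is what I would expect to make the accounting in the previous paragraph close up tightly at $2q + o(q)$, and it is the step that will require most of the care.
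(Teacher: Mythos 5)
Your proposal is a plan, not a proof: no graph family is actually produced. You correctly identify the crux — getting the sharp constant $2$ rather than some larger constant, and avoiding constructions (subdivided $K_m$, incidence graphs) where long induced pendant paths blow $t(G)$ up to $\Theta(n)$ — but you then explicitly defer that crux (``it is the step that will require most of the care''). As written there is no construction, no counting, and therefore no bound; the ``backbone $+$ auxiliary vertices'' picture and the appeal to $|E(T)|=|V(T)|-1$ remain heuristics without a concrete graph to apply them to. This is a genuine gap, and it is exactly the content of the observation.

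For comparison, the paper's construction is simpler than your sketch suggests and does not have a small backbone plus many auxiliary vertices. Take disjoint sets $V_{-(k-1)},\dots,V_{k-1}$ with $|V_i|=k-|i|$ (sizes $1,2,\dots,k,\dots,2,1$, total $k^2$) and join every $V_i$ to $V_{i+1}$ by a complete bipartite graph. The key observation is a local $4$-cycle obstruction: if an induced tree contains two vertices of some $V_i$ and a vertex of $V_{i-1}$, it cannot reach any $V_j$ with $j<i-1$, and symmetrically. Hence a maximum induced tree consists of all of two extreme levels $V_a,V_b$ together with one vertex from each intermediate level, and with the triangular size profile this always gives exactly $2k-1$ vertices, yielding $f_B(k^2)\le 2k-1$; interpolation to general $n$ is by passing to an induced connected subgraph. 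Note that the driving force is this $4$-cycle obstruction localizing the tree to two ``fat'' levels, not the Euler-type identity you invoke; you would do well to look for that kind of local forbidden configuration rather than a global type-count optimization.
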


\begin {proof}
It is enough to take a path with each edge replaced
by a complete bipartite graph.
More precisely, we take pairwise disjoint sets $V_i$ 
(for $i = -(k-1), \dots, k-1$) such that $|V_i|=k-|i|$.
We let $G$ be the graph with vertices $V=\bigcup_{|i|<k} V_i$
and all possible edges between $V_i$ and~$V_{i+1}$ (for $i=-(k-1), \dots, k-2$).

It is clear that if an induced tree in~$G$ contains
a vertex from $V_i$ and two vertices from $V_{i+1}$
then it contains no vertex of $V_j$ for $j > i+1$; similarly
for $i+1$ replaced by $i-1$. Therefore any maximum induced tree
is one of trees $T_{a,b}$ ($-(k-1) < a < b < k-1$ and $b-a > 1$):
it contains all vertices from two levels, $V_a$ and $V_b$
and one vertex from each $V_i$ where $a < i < b$.
It is easy to compute that such tree contains $2k-1$ vertices
out of the $|V|=k^2$; this proves $f_B(k^2) \le 2k-1$.
If $(k-1)^2 < n \le k^2$ then we take a subgraph of~$G$ 
to show that $f_B(n) \le 2k-1 < 2 \sqrt n + 1$.
\end {proof}

\begin{lemma}[Blow-up construction]\label{l:blowup}
Let $G$ be a connected triangle-free graph and let $W\subseteq V(G)$
be a subset of $m$ vertices ($m\ge 3$) such that any induced tree in~$G$
contains at most~$t$ vertices of $W$. Then we have
$f_T(n)=O(n^{\ln(t-1)/\ln(m-1)})$. The same result holds
with ``triangle-free'' replaced by ``bipartite'' and with $f_T$
replaced by $f_B$.
\end{lemma}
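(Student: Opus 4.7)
The plan is to iterate $G$ into a ``tree of gadgets'' and to control induced trees in the resulting graph by exploiting the cut-vertex structure at the shared ports.

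\textbf{Construction.} Designate one vertex $r\in W$ as the \emph{root port} and call the remaining $m-1$ vertices of $W$ the \emph{child ports}. Define $H_k$ recursively: $H_0 = G$, and $H_k$ is obtained from a fresh copy of $G$ by attaching, at each of its $m-1$ child ports, a copy of $H_{k-1}$ glued by identifying the child port with the root port of that copy of $H_{k-1}$; the root port of $H_k$ is the untouched $r$ of the top copy. The recursion $|V(H_k)| = n_0 + (m-1)(|V(H_{k-1})|-1)$ gives $|V(H_k)| = \Theta((m-1)^k)$. Connectivity is immediate. Since any two distinct gadgets share at most one vertex, every triangle in $H_k$ must lie inside a single copy of $G$, so triangle-freeness transfers from $G$ to $H_k$. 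In the bipartite case, a proper $2$-coloring of $H_k$ is built inductively: color each attached copy of $G$ using either the fixed coloring $\chi$ of $G$ or its swap, whichever makes the two sides agree at the shared port.

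\textbf{Main estimate.} The heart of the proof is the bound $t(H_k) = O((t-1)^k)$. Let $T$ be any induced tree in $H_k$ and let $G_0$ denote the top gadget. The key structural claim is that $T_0 := T \cap V(G_0)$ is either empty or connected, hence an induced tree of $G$. Indeed, each sub-copy $H_{k-1}^{(i)}$ meets $V(G_0)$ only in the single port $w_i$, so any $T$-path leaving $V(G_0)$ through $w_i$ would have to return through $w_i$ again and thereby create a cycle in $T$; hence the unique path in $T$ between any two vertices of $T_0$ stays inside $G_0$. The hypothesis on $G$ then gives $|T_0 \cap W| \le t$, and since the root port $r$ accounts for one of these $t$ slots whenever $r \in T$, at most $t-1$ of the child ports of $G_0$ lie in $T$. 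For each such surviving child port $w_i$, the restriction $T \cap V(H_{k-1}^{(i)})$ is an induced tree of $H_{k-1}^{(i)}$ containing its root port, and by induction its size is at most $B_{k-1}$. Summing yields $B_k \le n_0 + (t-1) B_{k-1}$, where $B_k$ denotes the maximum size of an induced tree in $H_k$ that contains its root port; this solves to $B_k = O((t-1)^k)$, and an analogous bound holds for $t(H_k)$ itself.

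\textbf{Finishing and obstacle.} Given $n$, choose $k$ minimal with $|V(H_k)| \ge n$ and iteratively delete non-cut vertices from $H_k$ to obtain a connected induced subgraph on exactly $n$ vertices. This subgraph is still triangle-free (respectively bipartite), and its $t$-value is at most $t(H_k) = O((t-1)^k) = O(n^{\ln(t-1)/\ln(m-1)})$, giving the bound on $f_T$ (and on $f_B$). The main obstacle is the structural fact that $T \cap V(G_0)$ is connected; once this is established, the hypothesis on $G$ propagates cleanly through the recursion, and the improvement from ``$t$'' to ``$t-1$''---which is precisely what produces the claimed exponent---arises naturally from the root port being charged to the parent gadget in the recursive accounting.
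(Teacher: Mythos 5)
Your argument is correct, and it differs from the paper's in two respects worth noting. First, the construction: the paper keeps the copies of $G$ vertex-disjoint and joins them by fresh \emph{edges}, connecting $w_i$ of a parent copy to $w_0$ of a child copy; you instead \emph{identify} the child port with the root port of the sub-copy. Both preserve triangle-freeness and bipartiteness, but your version makes the ports literal cut vertices, which is what powers your ``$T\cap V(G_0)$ is connected'' observation. Second, the accounting: the paper contracts each copy of $G$ to a single node, shows the set of copies met by an induced tree spans a subtree of the underlying $(m-1)$-ary tree with maximum degree at most $t$, bounds that subtree's size by $\Theta((t-1)^l)$, and multiplies by $|V(G)|$; you instead run a direct recursion $B_k\le n_0+(t-1)B_{k-1}$ on the quantity ``largest induced tree containing the root port,'' obtaining the factor $t-1$ (rather than $t$) from the root port being charged to the parent. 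The paper gets the same $t-1$ from the degree-versus-children count in a rooted tree, so the two mechanisms are really the same phenomenon in different clothing. A small bonus of your write-up is that you address explicitly how to pass from $H_k$ (whose order only roughly equals $n$) to a connected graph on exactly $n$ vertices by repeatedly deleting non-cut vertices; the paper glosses over this. One point both treatments leave implicit, and which you might flag, is why the hypotheses force $t\ge 3$ (hence $t-1\ge 2$), which is needed for the exponent $\ln(t-1)/\ln(m-1)$ to be positive and for your recursion $B_k\le n_0+(t-1)B_{k-1}$ to actually grow.
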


\begin {proof}
We let $W = \{w_0, \dots, w_{m-1}\}$, and
write $r=m-1$ and $q=t-1$ to simplify expressions.
As $G$ is triangle-free it follows that $t\ge 3$, and so $q\ge 2$.

Let $T = T_{r,l}$ be a rooted tree with $l$ levels in which each non-leaf
vertex has $r$ sons. Next, for each vertex $v$ of~$T$ we take
a copy $G_v$ of~$G$ (so that distinct copies are disjoint).
Whenever $v$ is a non-leaf vertex of~$T$ and $u$ is its $i$-th son, 
we introduce an edge between $w_i$ in $G_v$ and $w_0$ in~$G_u$;
the resulting graph will be called $T(G)$ (see Fig.~\ref{fig:construction}).
Clearly this graph
is triangle-free/bipartite if $G$ was triangle-free/bipartite.
Moreover, $|V(T(G))| = |V(T)| \cdot |V(G)|$ and 
$|V(T)| = \frac { r^{l+1}-1 }{ r-1 } = \Theta(r^l)$
(since $l \to \infty$ and $r \ge 2$). 

\def\Sbar{\bar S}
Let $S$ be an induced subtree of~$T(G)$ and put 
$$
  \Sbar = \{v \in V(T) \mid \mbox{$G_v$ contains a vertex of~$S$} \} \,.
$$
By construction, $S \cap G_v$ is a tree in $G_v$ for each~$v$.
So the condition on~$G$ implies that each vertex of~$\Sbar$ has at most~$t$ neighbors
in~$\Sbar$. Consequently, we have (since $q\ge 2$)
$$
  |\Sbar| \le 1 + \sum_{i=1}^l (q+1) q^{i-1} 
          \le 1 + (q+1) \frac { q^l-1 }{ q-1 } 
          = \Theta (q^l) \,.
$$

Now recall that $q$, $r$, and $|V(G)|$ are constants.
For a given $n$, choose the smallest~$l$ such that $n \le |V(T_{r,l}(G))|$; 
we have $n = \Theta(r^l)$. By the above considerations, 
$$
 f(n) \le f(T_{r,l}(G)) 
      \le |V(G)| \cdot \Theta(q^l)
        = \Theta (r^{l \log_r q})
        = \Theta (n^{\log_r q})  \,,
$$
which finishes the proof.
\end {proof}

\begin {corollary}   \label{sqrt}
If $f_T(n_0) < \sqrt {n_0}$ for some $n_0$,
then $f_T(n) = O(n^{1/2-\eps})$ for a positive constant $\eps>0$.
(The same is true for $f_B$.)
\end {corollary}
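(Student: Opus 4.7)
The plan is a direct application of the Blow-up construction (Lemma~\ref{l:blowup}) to the graph $G$ that witnesses the hypothesis. Fix a connected triangle-free graph $G$ on $n_0$ vertices with $t(G) < \sqrt{n_0}$, and apply the lemma with $W = V(G)$, so that $m = n_0$ and $t = t(G)$. The main hypothesis of the lemma --- every induced tree contains at most $t$ vertices of $W$ --- is immediate from the very definition of $t(G)$, since $W = V(G)$. Lemma~\ref{l:blowup} then yields
$$
f_T(n) = O\bigl(n^{\ln(t(G)-1)/\ln(n_0-1)}\bigr).
$$

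What remains is to show that this exponent is strictly less than $\tfrac12$, i.e.\ that $(t(G)-1)^2 < n_0-1$. This is a routine consequence of $t(G) < \sqrt{n_0}$ together with the elementary inequality $\sqrt{n_0}-1 < \sqrt{n_0-1}$ (valid for $n_0>1$). Setting $\eps := \tfrac12 - \ln(t(G)-1)/\ln(n_0-1)>0$ then gives the claim. One should also verify the mild side condition $m\ge 3$ of the lemma, but this is automatic: any connected triangle-free graph on at least three vertices contains an induced $P_3$, so $t(G)\ge 3$, and combined with $t(G)<\sqrt{n_0}$ this forces $n_0 \ge 10$.

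There is no genuine obstacle here --- the heavy lifting was done in Lemma~\ref{l:blowup}, and the proof is essentially a single substitution. The only point requiring a moment's thought is the strict comparison of the exponent with $\tfrac12$, which reduces to the one-line inequality $\sqrt{n_0}-1 < \sqrt{n_0-1}$. The parenthetical claim for $f_B$ follows identically, invoking the bipartite case of the blow-up lemma in place of the triangle-free case.
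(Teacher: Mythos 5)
Your proof is correct and takes the same approach as the paper: apply Lemma~\ref{l:blowup} with $W=V(G)$, $m=n_0$, $t=t(G)$. The paper leaves the exponent check and the side conditions implicit; you have simply filled in those routine details (the $\sqrt{n_0}-1<\sqrt{n_0-1}$ comparison and the $t(G)\ge 3$, $m\ge 3$ sanity checks), and they are all correct.
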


\begin {proof}
Let $G$ be the graph on $n_0$ vertices for which $t(G) = t < \sqrt{n_0}$.
We let $W = V(G)$ and $m=n_0$ and apply Lemma~\ref{l:blowup}.
\end {proof}

As mentioned in the introduction, Krishnan and Ochem~\cite{KO-itrees} search for
values of $f_T(n)$ using a computer. This was motivated by hope that
Corollary~\ref{sqrt} would apply. It turns out, however, that for small $n$
Observation~\ref{l:path} gives a precise estimate even for $f_T(n)$ (e.g.,
$f_T(15)=7$); therefore Corollary~\ref{sqrt} does not apply.

\heading{Remark. }
If we consider the construction from Lemma~\ref{l:blowup}
for $G = K_3$, $W = V(G)$, $m=3$, and $t=2$ we recover
a result of~\cite{ESS-itrees} that there is a graph~$G$ containing
triangles (but no~$K_4$) such that $t(G) = O(\log n)$.

\begin {figure}
  \includegraphics{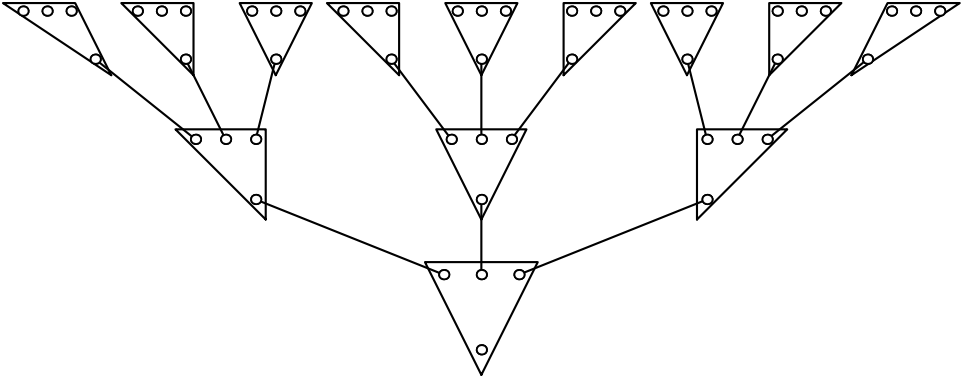}
  \caption{Graph $T_{3,2}(G)$ from the proof of Lemma~\ref{l:blowup}.} 
  \label{fig:construction}
\end {figure}

\section{Lower bound for bipartite graphs}\label{s:weak}

Here we prove a statement weaker than Theorem~\ref{t:t}---we 
give a bound on $f_B(n)$ instead of $f_T(n)$.
The proof is simpler than that of Theorem~\ref{t:t}
and it serves as an introduction to it.

We begin with a lemma about selecting induced forests of a particular kind in
a bipartite graph.
We introduce some terminology. Let $H$ be a bipartite graph
with color classes $A$ and $B$. We will think of $A$ as the ``top''
class and $B$ as the ``bottom'' class (in a drawing of $G$ in the plane, say). 
We write $a=|A|$ and $b=|B|$. For a subgraph $F$ of $H$ we write
$A(F)=V(F)\cap A$,
we set $a(F)=|A(F)|$, and we define $B(F)$ and $b(F)$ similarly.

Whenever we say {\em forest\/}
we actually mean an induced subgraph of $H$ that is a forest.
An {\em up-forest\/} $F$ is a forest such that every vertex
in $A(F)$ has degree (in~$F$) precisely 1 and every vertex in $B(F)$ has
degree (in $F$) at least 1.

\begin {figure}[ht]
  \centerline{\includegraphics{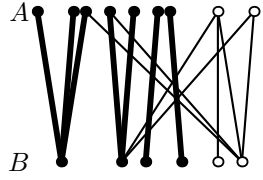}}
  \caption{An up-forest}
  \label{fig:upforest}
\end {figure}

A {\em matching\/} is a forest~$F$ in which all vertices have degrees (in $F$) exactly 1.

\begin{lemma}\label{l:slabe}
Let $H$ be a bipartite graph with color classes $A$ and $B$ as above,
let $\Delta$ be the maximum degree of $H$, and let $\eta\in (0,1)$
be a real parameter. Let us suppose that every vertex in $A$
is connected to at least one vertex in $B$. Then at least one
of the following cases occurs:
\begin{enumerate}
\item[\rm(M)] There is a matching with at least $(1-\eta)a$ edges.
\item[\rm(B)] There is an up-forest $F$ with
$$
b(F)\geq \frac\eta{\Delta^3}\cdot a
$$
that is $2$-branching, meaning that
every vertex  in $B(F)$ has degree at least~$2$ in $F$.
\end{enumerate}
\end{lemma}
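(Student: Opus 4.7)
The starting point is a maximum matching $M$ of $H$. If $|M| \geq (1-\eta)a$ we are in case (M). Otherwise the set $U$ of $A$-vertices unmatched by $M$ has $|U| \geq \eta a$, and by maximality of $M$ every $B$-neighbour of a vertex of $U$ is $M$-matched to some vertex of $A\setminus U$. The goal is to use this unmatched side to produce an induced $2$-branching up-forest of the required size.

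Classify $B$ by $U$-degree: set $B^1 = \{b : |N(b)\cap U|=1\}$ and $B^{\geq 2} = \{b : |N(b)\cap U|\geq 2\}$. Since $\sum_{b} |N(b)\cap U| \geq |U|$, one has $|B^1| + \Delta|B^{\geq 2}| \geq |U|$, so either $|B^1| \geq |U|/2$ or $|B^{\geq 2}| \geq |U|/(2\Delta)$. To each distinguished $B$-vertex I would associate a candidate star: for $b \in B^{\geq 2}$ the center $b$ with any two chosen $U$-leaves $u_1(b), u_2(b)$; for $b \in B^1$ the center $b$ with leaves the unique $U$-neighbour $u(b)$ and the $M$-mate $a(b) \in A\setminus U$ (this is where the maximality of $M$ is really used, to guarantee $a(b)$ exists).

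Finally I would prune the candidate family so that the induced subgraph on the union of chosen vertices is exactly the disjoint union of the chosen stars. Two kinds of bad events must be eliminated: leaf collisions, and ``parasitic'' edges from a chosen leaf to a foreign chosen center (edges $u_i(b)\sim b'$ or $a(b)\sim b'$ with $b\neq b'$). In the $B^1$-branch the $a(b)$'s are automatically distinct by injectivity of $M$, so only one thinning step is needed to make the $u(b)$'s distinct, costing a factor $\Delta$ (each $u$ can equal $u(b)$ for at most $\Delta$ values of $b$). In both branches the remaining parasitic edges contribute $O(\Delta)$ conflicts per candidate, so a random selection with probability $\Theta(1/\Delta)$ followed by removing one endpoint of each surviving conflict (or equivalently a greedy independent-set argument on the bounded-degree conflict graph) survives a $\Theta(1/\Delta)$ fraction. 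Multiplying losses gives $b(F) = \Omega(\eta a/\Delta^2)$, which fits comfortably within the stated bound $\eta a/\Delta^3$.

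The main obstacle is precisely this final pruning step: making sure the induced subgraph on the chosen vertex set is really a disjoint union of stars rather than something denser. This is the only place where $\Delta$ enters, via the crude but sufficient bound that each chosen leaf has at most $\Delta - 1$ other $B$-neighbours that could serve as a foreign center. The rest of the proof is essentially just König/Hall bookkeeping around the maximum matching.
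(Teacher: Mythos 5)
There is a genuine gap in your very first step. In this paper a \emph{matching} is an \emph{induced} $1$-regular subgraph of $H$ (the definitions just before the lemma say that ``forest'' means an induced subgraph that is a forest, and ``matching'' means a forest in which every vertex has degree exactly~$1$). A maximum matching of $H$ in the ordinary sense --- a largest set of pairwise vertex-disjoint edges --- need not be induced and can be far larger than any induced matching. Take $H=K_{n,n}$: the maximum matching has $n$ edges, yet no induced matching has more than one edge; with your reading you would declare ``we are in case (M)'' with $(1-\eta)a=(1-\eta)n$, which is false for any $\eta<1-1/n$. (The lemma still holds for $K_{n,n}$, but via case (B): $\eta a/\Delta^{3}<1$, so a single $B$-vertex with two $A$-neighbours already suffices.) Nor can you just replace $M$ by a maximum \emph{induced} matching, because the rest of your plan --- the existence of the $M$-mate $a(b)$ and the bound $|U|\geq \eta a$ --- relies on augmenting-path maximality, which fails for induced matchings. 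So the (M) step has to be reworked, not merely reinterpreted.

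The paper avoids matchings altogether: it sets $B'$ to be the degree-$1$ vertices of $B$ and, when these cover most of $A$, selects for each covered $a\in A$ one degree-$1$ neighbour; this is automatically an induced matching, with the extra property (silently used in the application) that every matched $B$-vertex has \emph{all} its $H$-neighbours inside the matching. Otherwise at least $\eta a$ vertices of $A$ see only $B$-vertices of degree at least $2$, and the $2$-branching up-forest is built greedily: repeatedly take such a $B$-vertex together with its entire $A$-neighbourhood and delete every $B$-vertex within distance two, costing at most $\Delta^{2}$ candidates per chosen star; this is the source of the $\Delta^{3}$. Your route to case (B) --- classifying $B$ by $U$-degree and then pruning a bounded-degree conflict graph --- is a genuinely different and in principle workable idea, but it is only a sketch, and the conflict degree in the $B^{\geq 2}$ branch is actually $O(\Delta^{2})$ rather than $O(\Delta)$ unless you first thin the leaves; this still fits within $\eta a/\Delta^{3}$, so it is a quantitative rather than a structural problem. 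The case (M) step, however, is a real error.
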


\begin {figure}[ht]
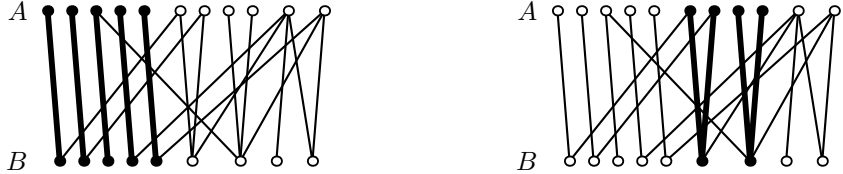

  \centerline{\hfill\includegraphics{itree-fig.3}\hfill\includegraphics{itree-fig.4}\hfill}
  \caption{An illustration of Lemma~\ref{l:slabe}}
  \label{fig:illulemma}
\end {figure}

\heading{Proof. } Let $B'\subseteq B$ be the set of vertices of degree~1
in $B$. If $|B'|\geq (1-\eta)a$ then, clearly, case (M) occurs,
so we may assume $|B'|< (1-\eta)a$. Let $B''\subseteq B$ consist
of all vertices of degree at least 2. Since every vertex
in $A$ has degree at least 1, $|E(H \setminus N(B'))|\geq \eta a$,
and so $|B''|\geq (\eta/\Delta)a$.

Let us set $B_0=B''$ and let $F_0$ be an empty graph.
Supposing that a set $B_{i-1}\subseteq B''$ and an up-forest $F_{i-1}$
have already been constructed with $B_{i-1}\neq\emptyset$, 
we construct $B_{i}$ and $F_{i}$.
We let $v_i$ be an arbitrary vertex in $B_{i-1}$, and we let $S_i$
be the star formed by $v_i$ and all of its neighbors in $A$.
We set $F_{i}=F_{i-1}\cup S_i$, we let $N_i\subseteq B$ be the
neighborhood of $A(S_i)$,
and we let $B_i$ be $B_{i-1}\setminus N_i$.
The construction finishes when $B_i=\emptyset$, with $F_{i}$
as the resulting up-forest.

It is easy to check that this construction indeed yields an up-forest $F$
with each degree in $B(F)$ at least 2.
We have $a(S_i)\leq \Delta$ and $|N_i|\leq a(S_i)(\Delta-1)+1$, and so
in each step, at most $|N_i|\leq \Delta(\Delta-1)+1\leq\Delta^2$
vertices are removed from $B_i$. Having started with at least
$(\eta/\Delta)a$ vertices, we can proceed for at least
$(\eta/\Delta^3)a$ steps, and so the resulting up-forest is as in (B).
\proofend
\medskip

Now we prove the lower bound
$$
f_B(n)\geq e^{c\sqrt{\log n}}
$$
for a constant $c>0$.

Let $G$ be a given connected bipartite graph. 
We assume that $n=|V(G)|$ is sufficiently large whenever convenient.
We let $t$ be the
``target size'' of an induced tree in $G$ we are looking for;
namely, $t=\ceil{\exp(c\sqrt{\log n}\,)}$.
If $G$ has a vertex of degree at least $t-1$, then we can take its star for the
induced tree and we are done, so we may assume that the maximum degree
satisfies $\Delta \le t-2$.

Let us fix an arbitrary vertex of $G$ as a root, and let $L_i$ be the
set of vertices of~$G$ at distance precisely $i$ from the root.
All edges of $G$ go between $L_{i-1}$ and $L_{i}$ for some $i$, since 
an edge within some $L_i$ would close an odd cycle.

We may assume that $L_t=\emptyset$,
for otherwise $G$ contains an induced path of length $t$.
Hence there is a $k$ with $|L_k|\geq n/t$. 

Let us fix such a $k$. We are going to construct
sets $M_{i}\subseteq L_i$, $i=k,k-1,\ldots$, inductively, until
we first reach an $i$ with $|M_i|=1$ (this happens for $i=0$ at the latest
since $|L_0|=1$). We shall let $\ell$ be this last $i$.

Suppose that nonempty sets $M_k,M_{k-1},\ldots,M_{i}$
have already been constructed,
in  such a way that the subgraph of $G$ induced 
by $M_k\cup\cdots\cup M_{i}$ is a forest, each of whose 
 components intersects $M_{i}$ in at most one
vertex. We are going to construct $M_{i-1}$.

Let us put $A=M_{i}$, $B=L_{i-1}$, and let us consider the
bipartite graph $H$ induced by $A \cup B$ in~$G$.
Every vertex of $A$ is connected to at least
one vertex in $B$. We set $\eta=\frac 1t$ 
and apply Lemma~\ref{l:slabe}.
This yields an up-forest $F$ in $H$ as in the lemma.
We define $M_{i-1}=B(F)$. 

If $F$ is a matching, i.e., case (M) occurred in the lemma, 
we call the step from $M_{i}$ to  $M_{i-1}$
a {\em matching step}. In this case, we have $|M_{i-1}|\geq
(1-\frac 1t)|M_{i}|$. Otherwise, $F$ is a 2-branching 
forest; then we call the step a {\em branching step}
and we have $|M_{i-1}|\geq |M_{i}|/(t\Delta^3) \geq
|M_{i}|/t^4$.

Suppose that the sets $M_k,\ldots,M_{\ell}$
have been constructed, $|M_{\ell}|=1$.
We claim that the number $b$ of branching steps in the construction is at least
$c_1\sqrt{\log n}$ for a suitable constant $c_1>0$.
Indeed, there are no more than $t$ matching steps,
and so $1=|M_{\ell}|\geq |M_k|(1-1/t)^t t^{-4b}
\geq (n/t)e^{-1}/2 \cdot t^{-4b} = \Omega(nt^{-4b-1})$.
Thus $b=\Omega(\log n/\log t)=\Omega(\sqrt{\log n}\,)$,
since $t=\ceil{\exp(c\sqrt{\log n}\,)}$.

It is easy to see that $M_k\cup M_{k-1}\cup\cdots\cup M_{\ell}$
induces a forest in $G$. We let $T$ be the component of this forest
containing the single vertex of $M_{\ell}$. Since every vertex
of $M_{i-1}$, $\ell < i\leq  k$, has at least one neighbor in $M_{i}$,
and if the step from $M_i$ to $M_{i-1}$ was a branching step then
each vertex of $M_{i-1}$ has at least two neighbors in $M_{i}$,
it follows that $T$ has at least $2^b=\exp(\Omega(\sqrt{\log n}\,))$
vertices. This finishes the proof of the lower bound
$f_B(n)\geq \exp(c\sqrt{\log n}\,)$.
\proofend

\heading{Remark. } The above proof may seem wasteful in many respects.
However, the result is tight up to the value of the constant in the exponent
if we insist on selecting an induced tree ``growing up'' 
(i.e., made of up-forests for some choice of root and corresponding sets~$L_i$). 
Indeed, any such induced tree in the graph $G_r$
in Figure~\ref{fig:qt} may contain at most two of the $r$ vertices at the topmost
level of the graph. 
Let us put $r=\exp(c\sqrt{\log n}\,)$ and glue copies of
$G_r$ according to the pattern of a complete $r$-ary tree (as in the proof
of Lemma~\ref{l:blowup}), so that the resulting graph has approximately $n$
vertices (that is, the depth is $l = \Theta(\log n)$.
We obtain a graph with all up-growing induced trees having size
at most $2^l = \exp(O(\sqrt{\log n}\,))$.

\begin {figure}[ht]
  \centerline{\includegraphics{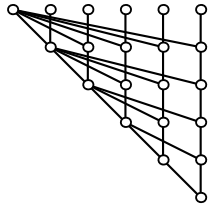}}
  \caption{Graph $G_6$ in which all ``up-growing trees'' contain at most
    two vertices of the uppermost level.}
  \label{fig:qt}
\end {figure}

\section{Lower bound for triangle-free graphs}\label{s:tf}

Here we prove Theorem~\ref{t:t}. The scheme of the proof is very similar to
the proof of the same bound for bipartite graphs in Section~\ref{s:weak}.
We continue using the definitions and notation from that proof.
So we decompose the given graph into the levels $L_0,L_1,\ldots,L_r$,
$r<t$. The main difference compared to the bipartite case is that
there may now be edges within the levels $L_i$. We will need the well-known
fact that any graph on $n$ vertices with maximum degree $\Delta$ contains
an independent set of size at least $n/(\Delta+1)$.
We will also need the following simple modification.

\begin{lemma}\label{l:turlike}
Let $\Gamma$ be a graph (not necessarily bipartite)
on $n$ vertices with maximum degree $\Delta$, and let $\eta\in [0,1]$
be a real parameter. Then at least one of the following two cases occurs:
\begin{enumerate}
\item[\rm(IS)] $\Gamma$ contains an independent set with at least $(1-\eta)n$ vertices.
\item[\rm(IM)] $\Gamma$ contains an induced matching with at least $\frac{\eta}{2\Delta} n$
edges.
\end{enumerate}
\end{lemma}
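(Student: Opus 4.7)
The plan is a straightforward greedy construction of an induced matching, where failure of the construction forces the independent set alternative. Formally, I would run the following procedure: start with $\Gamma_0 = \Gamma$ and $M = \emptyset$; at step $i$, if $\Gamma_{i-1}$ contains an edge, pick an arbitrary such edge $u_iv_i$, add it to $M$, and let $\Gamma_i$ be obtained from $\Gamma_{i-1}$ by deleting all vertices of $N_{\Gamma}(u_i) \cup N_{\Gamma}(v_i)$ (this includes $u_i$ and $v_i$ themselves, since $u_iv_i$ is an edge). Terminate when no edges remain.

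The two key observations are: (i) the matching $M$ is induced in $\Gamma$, because whenever we add an edge we remove every vertex adjacent to either of its endpoints, so no later-chosen matching edge can share an endpoint with any neighbor of a previously chosen one; and (ii) when the process stops, the surviving vertex set $I$ has no edges in $\Gamma$, hence is an independent set of $\Gamma$. Each step deletes at most $|N_{\Gamma}(u_i)\cup N_{\Gamma}(v_i)| \le 2\Delta$ vertices, so if $m := |M|$ is the number of edges picked, then $|I| \ge n - 2\Delta m$.

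Now I would split on the size of $m$. If $m \ge \frac{\eta}{2\Delta} n$, then (IM) holds directly. Otherwise $m < \frac{\eta}{2\Delta} n$, which gives $|I| > n - \eta n = (1-\eta)n$, so the independent set $I$ witnesses (IS). Either way the conclusion of the lemma is satisfied.

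I do not expect any serious obstacle: the argument is essentially the standard "greedy matching leaves an independent remainder" trick, with the removal of the full closed neighborhoods $N[u_i]\cup N[v_i]$ (rather than just $\{u_i,v_i\}$) being the only twist needed to make the matching induced rather than just a matching. The constant $2\Delta$ in (IM) is exactly the per-step deletion budget of this procedure.
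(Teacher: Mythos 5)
Your proof is correct and follows essentially the same greedy argument as the paper: repeatedly pick an edge, delete the full neighborhoods of both endpoints (at most $2\Delta$ vertices per step) to keep the matching induced, and observe that if the process stops early the surviving vertices form a large independent set. The dichotomy on $m$ versus $\frac{\eta}{2\Delta}n$ matches the paper's reasoning exactly.
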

\heading{Proof. } We repeatedly select edges $e_1,e_2,\ldots$ of $\Gamma$;
having selected $e_i$, we delete it and all the neighbors of its endvertices from
the current graph. In each step we delete at most $2\Delta$ vertices,
so we either construct an induced matching as in (IM) or reach an edgeless graph
after deleting at most $\eta n$ vertices, hence yielding an induced set
as in (IS).
\proofend

\heading{Proof of Theorem~\ref{t:t}. }
We proceed similarly as in the previous section.
We suppose $G$ is a given triangle-free graph on $n$ vertices
(and that $n$ is big enough), we put $t=\ceil{\exp(c\sqrt{\log n}\,)}$.
Again, we may assume $t\le \Delta-2$: $G$~is triangle-free, so a star
of a vertex is an induced tree.

As before, we begin by selecting a root vertex and constructing the at most~$t$ 
levels $L_0$, $L_1$, \ldots. We select~$k$ such that $|L_k|\geq n/t$ and we will
construct sets $M_k$, $M_{k-1}$, \dots, $M_{\ell}$, such that $M_i\subseteq L_i$,
$|M_{\ell}|=1$, in such a way that their union induces a forest in~$G$. In the
induction step, we will either construct $M_{i-1}$ from $M_i$, or sometimes we
will go down two levels at once, producing both $M_{i-1}$ and $M_{i-2}$.

We begin by selecting $M_k$ as an independent set
in the subgraph induced by $L_k$.
By the fact mentioned before Lemma~\ref{l:turlike}
we may assume $|M_k|\geq |L_k|/t\geq n/t^2$.

We suppose that $M_i$ has already been constructed so that  
each component of the forest induced by $M_k\cup\cdots\cup M_i$
intersects $L_i$ in at most one vertex (and, in particular, $M_i$
is an independent set). Now we proceed as
in the proof in Section~\ref{s:weak}: 
We let $A=M_i$, $B=L_{i-1}$, and we consider
the bipartite graph $H$ induced by~$A \cup B$ in~$G$.
We apply Lemma~\ref{l:slabe}
to $H$ with $\eta=\frac 1t$, obtaining an up-forest $F$. 
We set $M'_{i-1}=B(F)$; this is not yet the final $M_i$ 
since there may be edges on $M'_{i-1}$.

If case (B) occurred in Lemma~\ref{l:slabe}, 
we have $|M'_{i-1}|\geq |M_i|/t^4$.
We let $M_{i-1}$ be an independent set of 
size $|M'_i|/(\Delta+1)\geq |M_i|/t^5$ 
in the subgraph induced by $M'_{i-1}$.
We call this step a {\em branching step}.

If case (M) occurred in Lemma~\ref{l:slabe}, 
we have $|M'_{i-1}|\geq (1-\frac 1t)|M_i|$.
Then we apply Lemma~\ref{l:turlike} with $\eta=\frac 1t$
to the graph $\Gamma$ induced in $G$ by $M'_{i-1}$.
If case (IS) applies in that lemma, we let $M_{i-1}$ be the independent
set of size at least $(1-\frac 1t)|M'_{i-1}|\geq (1-\frac1t)^2|M_i|$;
we call this step a {\em matching step}. Both the matching step
and the branching step go one level down, from $i$ to $i-1$.

If case (IM) applies in Lemma~\ref{l:turlike},
we define $M_{i-1}$ as the vertex 
set of the induced matching from the lemma.
In this case we have $|M_{i-1}|\geq (1-\frac 1t)|M_i|/t^2$.
Note that this $M_{i-1}$ does not satisfy the inductive assumption
(it is not an independent set). We are also going to 
construct $M_{i-2}$ in the same step, thus going from $i$ to $i-2$.
To obtain  $M_{i-2}$, we define another
auxiliary bipartite graph, which we again call  $H$ to save letters.
The bottom color class $B$ is $L_{i-2}$, and the top color class
$A$ is obtained by contracting the edges induced by $M_{i-1}$.
More formally, we set $A=\{ uu'\in E(G)\sep u,u'\in M_{i-2}\}$, 
$B=L_{i-2}$, and 
$E(H)=\bigl\{ \{ uu', v\}  \sep u,u'\in A, v\in B,\, uv\in E(G)\mbox{ or } u'v\in E(G)\bigr\}$. 
(Note that in this definition it can not happen that both $uv$ and $u'v$ are edges of~$G$, 
as $G$ is triangle-free.)
We apply Lemma~\ref{l:slabe} with $\eta=\frac12$, say, to $H$.
In both of cases (M) and (B) we obtain an up-forest $F$ in $H$
with $b(F)\geq |M_{i-1}|/(32t^3)$ (we note that $|A|=\frac 12|M_{i-1}|$
and that $H$ has maximum degree no larger than $2t$).
We set $M'_{i-2}=B(F)$, and finally, we select $M_{i-2}$ as an
independent set of size at least $|M'_{i-2}|/t$ in the subgraph
induced by $M'_{i-2}$.  Since $G$ is triangle-free, one can check
that $M_k\cup\cdots\cup M_{i-1}\cup M_{i-2}$ induces a forest. 
We have $|M_{i-2}|\geq
|M_{i-1}|/32t^4\geq |M_{i}|\cdot(1-\frac 1t)/32t^6\geq |M_i|/t^7$.
We call this step from $M_{i}$ to $M_{i-2}$
a {\em double-step}. 

By calculation similar to that in Section~\ref{s:weak}, we find that
the number $b$ of branching steps and double-steps together is
at least $\Omega(\sqrt{\log n}\,)$. We again claim that the
component of the forest induced by  $M_{k}\cup\cdots \cup M_{\ell}$
containing the single vertex of $M_{\ell}$ has at least $2^b$ vertices.
Indeed, if $M_i$ was obtained from $M_{i+1}$ by a branching step,
then each vertex of $M_i$ has at least two successors in $M_{i+1}$.
If $M_i$ was obtained from $M_{i+2}$ by a double-step, then
each vertex $v$ of $M_i$ has at least one succesor in $M_{i+1}$, this
is connected by an edge to precisely one other vertex of $M_{i+1}$, 
and both of these vertices have one neighbor in $M_{i+2}$;
consequently $v$ has at least two successors in $M_{i+2}$.
Theorem~\ref{t:t} is proved.\proofend

\subsection*{Acknowledgment}
We would like to thank participants of a research seminar where initial
steps of this work have been made, for a stimulating environment.
In particular, we are indebted to Robert Babilon, Martin B\'alek, 
Helena Nyklov\'a, Ondra Pangr\'ac, and Pavel Valtr 
for useful discussions and observations.

\bibliographystyle{rs-amsplain}
\bibliography{itree}

\end{document}